\def\cyr{%
\renewcommand\rmdefault{wncyr}%
\renewcommand\sfdefault{wncyss}%
\renewcommand\encodingdefault{OT2}%
\normalfont
\selectfont}
\DeclareMathAlphabet{\zap}{OT1}{pzc}{m}{it}
\DeclareTextFontCommand{\textcyr}{\cyr}
\def\be{\begin{equation}}
\def\ee{\end{equation}}
\def\bea{\begin{eqnarray*}}
\def\eea{\end{eqnarray*}}
\def\bcp{\mathbb C \mathbb P}
\def\CC{\mathbb C}
\def\HH{\mathbb H}
\newtheorem{main}{Theorem}
\newtheorem{lem}{Lemma}
\newtheorem{prop}{Proposition}
\newenvironment{proof}{\medskip \noindent
{\bf Proof.}}{\hfill \rule{.5em}{1em}
\\}
\def\ZZ{{\mathbb Z}}
\def\RR{{\mathbb R}}
\def\CP{{\mathbb C \mathbb P}}
\begin{document}

\title{Twistors, Hyper-K\"ahler Manifolds,\\
and Complex Moduli}

\author{Claude LeBrun\thanks{Supported 
in part by  NSF grant DMS-DMS-1510094.}\\Stony
 Brook University}

\date{January 27, 2017}
\maketitle

\hspace{1.8in}
\begin{minipage}{3.2in}
\begin{quote} {\em For my good friend and admired 
 colleague Simon Salamon, on the occasion of
his sixtieth birthday. }
\end{quote}
\end{minipage}

\bigskip 
 
 \begin{abstract} A theorem of Kuranishi \cite{kuranishi} tells us that 
 the moduli space of complex structures on any  smooth compact manifold
 is  always  {\em locally} a finite-dimensional space. {\em Globally}, however, this 
 is simply not  true;  we display  examples in which   the moduli space contains a sequence of regions 
 for which  the local dimension tends to infinity. These  examples  naturally arise 
 from  the twistor theory of hyper-K\"ahler manifolds. 
  \end{abstract}

 If $Y$ is a smooth compact manifold, the moduli space 
 $\mathfrak{M}(Y)$ of complex structures on $Y$ is defined to be the quotient of
 the set of all smooth integrable almost-complex structure $J$ on $Y$, equipped with the topology 
 it inherits from the space of  almost-complex structures,  modulo the action of the group 
 of self-diffeomorphisms of $Y$. When we focus  only on complex structures near some given 
 $J_0$, an  elaboration of Kodaira-Spencer theory \cite{KS} due to Kuranishi \cite{kuranishi}  shows
 that the moduli space is locally finite dimensional. Indeed, if $\Theta$ denotes the sheaf 
 of holomorphic vector fields on $(Y,J_0)$, Kuranishi shows that there is a family of complex structures
 parameterized by an analytic  subvariety of the unit ball in $H^{1}(Y, \Theta )$ which, up to biholomorphism,  sweeps out every 
 complex structure near $J_0$. This subvariety of $H^{1}(Y, \Theta )$ is  defined by equations taking values
 in $H^2(Y, \Theta )$, and one must then also divide by the group of complex automorphisms of 
 $(Y,J)$, which is a Lie group with Lie algebra $H^0(Y, \Theta )$. But, in any case, near a given complex structure, 
this says that  the moduli space is  a finite-dimensional object, with dimension bounded above by $h^1(Y,\Theta)$.

What we will observe here, however, is that this local finite-dimensionality can completely break down in the large:
 
 \begin{main}
 \label{crux}
 Let $X^{4k}$ be  a smooth simply connected compact manifold that admits a hyper-K\"ahler metric. 
 Then the moduli space $\mathfrak{M}$ of complex structures on $S^2 \times X$ 
 is infinite dimensional, in the following   sense: for  every 
 $N\in \ZZ^+$, there are  holomorphic {embeddings}  
 $D^N \hookrightarrow \mathfrak{M}$ of the $N$-complex-dimensional unit polydisk $D^N:=D\times \cdots \times D \subset \CC^N$ into the moduli space. 
  \end{main}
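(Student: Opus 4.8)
The plan is to manufacture the required families directly out of the twistor space, by pulling it back along branched covers of the twistor line. Recall that a hyper-K\"ahler metric $g$ on $X^{4k}$ carries an $S^2$-family $\{I_\zeta\}_{\zeta\in\CP^1}$ of compatible integrable complex structures, and that the associated twistor space $Z$ is a complex manifold whose underlying smooth manifold is \emph{literally} $S^2\times X$, equipped with a holomorphic submersion $\pi\colon Z\to\CP^1$ whose fiber over $\zeta$ is $X_\zeta:=(X,I_\zeta)$. Two properties of the family $\{X_\zeta\}$ will be used, and can be arranged by choosing $g$ suitably: (i) the generic $X_\zeta$ has algebraic dimension zero; and (ii) the map $\mu\colon\CP^1\to\mathfrak{M}(X)$, $\zeta\mapsto[X_\zeta]$, is generically injective. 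For a K\"ahler $X$ that is a K3 or a complex torus, both follow from the explicit description of the twistor line inside the relevant period domain together with the global Torelli theorem, once one takes $g$ so that the positive three-plane $\langle\omega_I,\omega_J,\omega_K\rangle\subset H^2(X,\RR)$ is generic; for a general simply connected compact $X$ one splits $X$ as a product of K3 and irreducible holomorphic symplectic factors via de Rham/Beauville--Bogomolov and notes that algebraic dimension zero and genericity of periods pass to such products.

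Now let $\phi\colon\CP^1\to\CP^1$ be any holomorphic map. The fiber product $\phi^*Z=Z\times_{\CP^1}\CP^1\subset\CP^1\times Z$ is again a compact complex manifold, carrying a holomorphic submersion $\pi_1\colon\phi^*Z\to\CP^1$ with fiber $X_{\phi(w)}$ over $w$; and via $(w,x)\mapsto\bigl(w,(\phi(w),x)\bigr)$ its underlying smooth manifold is once more $S^2\times X$, so $\phi$ determines a complex structure $J_\phi$ on $S^2\times X$. As $\phi$ ranges over the space $\mathrm{Rat}_d$ of rational self-maps of $\CP^1$ of a fixed degree $d$ --- a connected complex manifold of dimension $2d+1$ --- the manifolds $\phi^*Z$ fit into a holomorphic family, which (using the preferred smooth trivialization just described) we regard as a holomorphic family of complex structures on the fixed manifold $S^2\times X$; this yields a holomorphic map $F_d\colon\mathrm{Rat}_d\to\mathfrak{M}$. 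Precomposing $\phi$ with an automorphism of $\CP^1$ is a reparametrization of the $S^2$-factor, hence a self-diffeomorphism of $S^2\times X$, so $F_d$ is invariant under the action of $PGL(2,\CC)$ by precomposition and factors through $\mathrm{Rat}_d/PGL(2,\CC)$, a space of dimension $2d-2$.

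The crux is to show that, generically, $F_d$ is no more than $PGL(2,\CC)$-to-one. By (i), every meromorphic function on $(S^2\times X,J_\phi)$ is constant on the generic fiber of $\pi_1$ and is therefore pulled back from $\CP^1$; thus $\pi_1$ is the algebraic reduction of $(S^2\times X,J_\phi)$ and so a biholomorphism invariant. Hence any biholomorphism $(S^2\times X,J_{\phi_1})\cong(S^2\times X,J_{\phi_2})$ covers some $g\in PGL(2,\CC)$ and restricts to biholomorphisms $X_{\phi_1(w)}\cong X_{\phi_2(g(w))}$ for all $w$; by (ii) this forces $\mu\circ\phi_1=\mu\circ(\phi_2\circ g)$ on a dense open set, whence $\phi_1=\phi_2\circ g$ identically. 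Choosing a local holomorphic slice for the (generically free) $PGL(2,\CC)$-action through a generic point of $\mathrm{Rat}_d$ gives a polydisk $D^{2d-2}$ on which the complex structures $J_\phi$ are pairwise non-biholomorphic and on which the Kodaira--Spencer map is injective; shrinking it, $F_d$ restricts to a homeomorphism onto its image, i.e. a holomorphic embedding $D^{2d-2}\hookrightarrow\mathfrak{M}$. Since $2d-2\to\infty$ with $d$, restricting to sub-polydisks produces the desired embeddings $D^N\hookrightarrow\mathfrak{M}$ for every $N\in\ZZ^+$.

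The main obstacle, as this outline makes clear, is the verification of (i) and (ii) for a well-chosen hyper-K\"ahler metric --- that is, the statement that the twistor line really is an honest one-parameter family of mutually non-isomorphic complex structures of algebraic dimension zero; the rest is bookkeeping. A secondary point requiring care is that the slice should map to an \emph{embedded} (not merely immersed, nor multiply-covered) piece of $\mathfrak{M}$: this means checking that the finite automorphism ambiguity permitted by (ii) and the identifications implicit in passing from Kuranishi space to $\mathfrak{M}$ do not destroy injectivity once the polydisk is taken small enough.
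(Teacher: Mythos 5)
Your overall architecture runs parallel to the paper's --- pull the twistor fibration $\varpi\colon Z\to\CP_1$ back along rational maps $\CP_1\to\CP_1$, show the resulting fibration on $(S^2\times X,J_\phi)$ is intrinsic so that biholomorphisms only permit a M\"obius ambiguity on the base, and then extract arbitrarily large parameter spaces --- but the mechanism you use at both crucial points is different, and as written it leaves a genuine gap. Everything rests on your hypotheses (i) (generic twistor fibers of algebraic dimension zero) and (ii) (generic injectivity of $\zeta\mapsto[X_\zeta]$), which you assert ``can be arranged by choosing $g$ suitably'' and justify only by invoking the period domain and ``the global Torelli theorem.'' For $K3$ this is classical, but your reference to complex tori is off the mark ($X$ is simply connected, so the Beauville decomposition has no torus factor), and for higher-dimensional irreducible holomorphic symplectic factors the inputs you need are substantial: existence on the \emph{same} smooth manifold of a hyper-K\"ahler metric whose period three-plane is generic (local period-map/Bogomolov--Tian--Todorov theory), triviality of the Picard group of the generic fiber and the deduction of algebraic dimension zero, a lattice-theoretic countability argument showing that for generic periods no integral isometry of the Beauville--Bogomolov lattice identifies distinct points of the twistor conic, and the persistence of (i) and (ii) under taking products of factors. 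None of this is carried out, and you yourself flag it as ``the main obstacle''; so the proposal is a conditional outline rather than a proof. (A smaller point: the injectivity of the Kodaira--Spencer map on your slice is neither proved nor needed; what the statement requires is that distinct parameters give non-biholomorphic fibers.)

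It is worth seeing how the paper sidesteps all of this, because its two replacements for (i) and (ii) are elementary and unconditional, valid for \emph{every} hyper-K\"ahler metric on $X$. First, instead of recovering the fibration as an algebraic reduction (which needs algebraic dimension zero of the fibers), it computes $K_{\hat Z}=\hat\varpi^*\mathcal{O}(-2k\ell-2)$ and uses simple connectivity to show $\hat\varpi^*\mathcal{O}(1)$ is the \emph{unique} $(2k\ell+2)^{\rm nd}$ root of $K^{-1}$, whose two-dimensional space of sections reproduces $\hat\varpi$; this makes the submersion intrinsic with no genericity hypothesis. Second, instead of distinguishing the pulled-back families by telling the fibers apart (your (ii)), it distinguishes them by where and to what order the Kodaira--Spencer map of $f^*\varpi$ vanishes: a direct computation with the holomorphic symplectic form shows the Kodaira--Spencer class of the twistor family is $2i[\omega_1]\neq 0$ at every point of $\CP_1$, so by functoriality the vanishing divisor of the Kodaira--Spencer map of $f^*\varpi$ is exactly the critical divisor of $f$. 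The configuration of critical points modulo M\"obius transformations is then an intrinsic invariant, and explicit polynomials with marked critical points at $0,1,\infty$ of distinct multiplicities kill the M\"obius ambiguity and give the embedded polydisks. If you want to salvage your route, the honest cost is a careful treatment of (i) and (ii) via period arguments (only the ``easy'' direction of Torelli --- a biholomorphism induces a Hodge isometry --- is needed for (ii), but the genericity and product steps still require proof); alternatively, adopting the canonical-root and Kodaira--Spencer arguments removes the need for any special choice of metric.
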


In fact, for every natural number $N$, we will construct  proper    holomorphic  submersions  $\mathfrak{Y}\to D^N$ with fibers diffeomorphic to 
 $X\times S^2$ such that no two fibers are biholomorphically equivalent. Focusing on  this  concrete  assertion  should help  avoid   confusing 
 the phenomenon   under study with 
  other   possible structural  pathologies of   the  moduli space $\mathfrak{M}$.

 Before proceeding further, it  might  help  to  clarify how our construction differs 
 from various off-the-shelf examples where Kodaira-Spencer theory produces  mirages of  moduli
 that should not  be mistaken  for the real thing. 
 Consider the Hirzebruch surfaces $F_\ell = \mathbb{P} (\mathcal{O} \oplus \mathcal{O} (\ell))\to \CP_1$. 
 These are all diffeomorphic to $S^2 \times S^2$ or $\CP_2 \# \overline{\CP}_2$, depending on whether 
 $\ell$ is even or odd. 
For $\ell > 0$, $h^1(F_\ell , \Theta_\ell)= (\ell -1) \to \infty$ and   $h^2 (F_\ell , \Theta_\ell )=0$, so it might appear that the dimension of
 the moduli space is growing without bound. However, when  these infinitesimal deformations are realized 
 by a versal family, most of the fibers always turn out to be mutually biholomorphic, because 
 $h^0( F_\ell , \Theta_\ell)= (\ell +5) \to \infty$, too, and a cancellation arises from the action of  the automorphisms of the 
 central fiber  on  the versal deformation. In fact, the $F_\ell$ represent {\em all} the complex structures
 on $S^2 \times S^2$ and $\CP_2 \# \overline{\CP}_2$; thus, while  the corresponding moduli spaces are highly non-Hausdorff, 
 they are in fact  just  $0$-dimensional. Similar phenomena also arise from  
 projectivizations of higher-rank vector bundles over $\CP_1$; even though it is easy to construct examples with  $h^1 (\Theta )\to \infty$ in
  this context, the piece  of the moduli space one  constructs  in this way is once again  non-Hausdorff and  $0$-dimensional.

 Let us now recall  that a smooth compact Riemannian manifold  $(X^{4k},g)$ is
 said to be {\em hyper-K\"ahler}  if its holonomy is a subgroup of  $\mathbf{Sp(k)}$. One then says that  a 
 hyper-K\"ahler manifold is {\em irreducible} if its holonomy is {exactly} $\mathbf{Sp(k)}$.  This in particular implies  \cite{beauville} that $X$ is simply connected. 
 Conversely, any  simply connected compact hyper-K\"ahler manifold is a Cartesian product of irreducible ones,
since its  deRham decomposition \cite{bes} cannot involve any flat factors. 
 In order to prove Theorem \ref{crux}, one therefore might as well  assume that $(X,g)$ is irreducible, 
 since any hyper-K\"ahler manifold admits complex structures, and $S^2 \times (X \times \tilde{X}) = (S^2 \times X) \times \tilde{X}$. 
Note that examples of irreducible hyper-K\"ahler $(4k)$-manifolds
are in fact known \cite{beauville,ogrady1} for every $k\geq 1$.  When $k=1$, the unique choice for $X$ is  $K3$.
For $k\geq 2$, the smooth manifold $X$ is no longer uniquely determined by $k$, but the 
 the Hilbert scheme of $k$ points on a $K3$ surface always   provides one  simple and elegant example. 
 
  The construction  we will use to prove Theorem \ref{crux} crucially involves 
  the use of twistor spaces \cite{bes,salqk}. Recall that the standard representation of $\mathbf{Sp(k)}$ on 
  $\RR^{4k} = \HH^k$ commutes with every almost-complex structure arising from  a  
quaternionic scalar  in    $S^2 \subset \Im m\, \HH$, and that every  hyper-K\"ahler manifold is therefore
K\"ahler with respect to a $2$-sphere's worth of parallel almost-complex structures. Concretely, 
if we let $J_1$, $J_2$,  and $J_3$ denote the complex structures corresponding to the 
quaternions $\mathbf{i}$, $\mathbf{j}$,  and $\mathbf{k}$, then the integrable complex structures in question 
are those given by $aJ_1 + bJ_2 + cJ_3$ for any $(a,b,c) \in \RR^3$ with $a^2+b^2+c^2=1$. We can then 
assemble these to form an integrable almost-complex structure on $X\times S^2$ by
using the round metric and 
standard orientation on $S^2$ to make it into a $\CP_1$, and then giving the 
$X$ the integrable complex structure $aJ_1 + bJ_2 + cJ_3$ determined by 
$(a,b,c)\in S^2$. For each $x\in X$, the  stereographic coordinate $\zeta = (b+ic)/(a+1)$
on $\{ x\} \times S^2$ is thus   a compatible complex coordinate system 
on the  so-called  {\em real twistor line}  $\CP_1 \subset Z$ 
near the point $(1,0,0)$ representing 
$J_1|_x$. 
We will make considerable use of 
 the fact that  the factor projection $X\times S^2\to S^2$ now  becomes a  holomorphic submersion $\varpi : Z\to \CP_1$
 with respect to the twistor complex structure, and will systematically exploit the fact that  
$\varpi$   can therefore be thought of as a family of complex structures on $X$.

 \begin{lem}
 \label{lemon} 
  Let $(X^{4k},g)$, $k\geq 1$,  be a hyper-K\"ahler manifold, 
 and let $Z$  be its twistor space. Consider the holomorphic submersion
 $\varpi : Z\to \CP_1$  as
 a family of compact complex manifolds, and   set $X_\zeta:= \varpi^{-1} (\zeta)$
 for any  $\zeta \in \CP_1 $. 
 Then the Kodaira-Spencer map 
$T^{1,0}_{\zeta_0} \CP_1  \to H^1 (X_{\zeta_0} , \mathcal{O}(T^{1,0}X_\zeta ))$
 is non-zero at every  $\zeta_0 \in \CP_1 $. 
  \end{lem}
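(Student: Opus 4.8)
\medskip
\noindent\textbf{Proof strategy.}
The plan is to compute the Kodaira--Spencer class at an arbitrary point $\zeta_0 \in \CP_1$ by hand and to recognize it, via the fibrewise holomorphic symplectic form, as a nonzero multiple of a K\"ahler class. Fix $\zeta_0$, let $J' = a_0 J_1 + b_0 J_2 + c_0 J_3$ be the corresponding parallel complex structure on $X$ (so that $X_{\zeta_0} = (X,J')$), and choose parallel complex structures $J''$, $J'''$ completing $(J',J'',J''')$ to a hyper-K\"ahler triple, i.e.\ an oriented orthonormal frame of the imaginary quaternions with $J'J'' = J'''$. Recall from the construction reviewed above that in the product trivialization $Z = X \times \CP_1$ the twistor complex structure is block-diagonal, $\mathcal{J}_{(x,\zeta)} = (J_\zeta, i)$; consequently $\varpi$ is literally the family of complex structures $\zeta \mapsto J_\zeta$ on the fixed smooth manifold $X$, and its Kodaira--Spencer class at $\zeta_0$ is represented by the $T^{1,0}X_{\zeta_0}$-valued $(0,1)$-form extracted from the derivative $\partial_\zeta J_\zeta$ in the holomorphic direction of $\CP_1$.

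Next I would do the (routine) linear algebra. Differentiating $J_\zeta = aJ_1 + bJ_2 + cJ_3$ along $\CP_1$ and matching the complex structure of the round $S^2 \cong \CP_1$ with the quaternionic cross product shows that, at $\zeta_0$, this derivative in the holomorphic direction is a nonzero constant multiple of $J'' - iJ'''$. The crucial observation --- a one-line check from the relations $J'J'' = J'''$, etc.\ --- is that $J'' - iJ'''$ annihilates $T^{1,0}_{J'}X$ and carries $T^{0,1}_{J'}X$ isomorphically onto $T^{1,0}_{J'}X$. Hence $\partial_\zeta J_\zeta$ is already of pure Beltrami type, and a Kodaira--Spencer representative at $\zeta_0$ is
\[
\phi_{\zeta_0} \;=\; c\,(J'' - iJ''')|_{T^{0,1}X_{\zeta_0}} \;\in\; A^{0,1}(X_{\zeta_0},\, T^{1,0}X_{\zeta_0}), \qquad c \neq 0 .
\]
This is pointwise nonzero; the substance of the lemma is that it is not $\bar\partial$-exact.

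For that I would invoke the holomorphic symplectic structure. Writing $\omega_\bullet(\cdot,\cdot) = g(J_\bullet\,\cdot,\cdot)$, the closed $2$-form $\sigma := \omega'' + i\omega'''$ has type $(2,0)$ with respect to $J'$ (for a suitable orientation convention on the triple) and is therefore a holomorphic, non-degenerate section of $\Omega^{2,0}X_{\zeta_0}$; contraction $v \mapsto \iota_v\sigma$ gives an isomorphism of holomorphic bundles $T^{1,0}X_{\zeta_0} \xrightarrow{\ \sim\ } \Omega^{1,0}X_{\zeta_0}$, hence $H^1(X_{\zeta_0}, T^{1,0}) \cong H^1(X_{\zeta_0}, \Omega^{1,0}) = H^{1,1}(X_{\zeta_0})$. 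A direct computation --- contracting $\sigma$ against $J''w$ for $w \in T^{0,1}_{J'}X$ and using the metric relations among the $J_\bullet$ --- then identifies the image of $\phi_{\zeta_0}$ under this isomorphism with a nonzero constant multiple of the $(1,1)$-form $\omega'$, i.e.\ with the K\"ahler class of the hyper-K\"ahler metric on the fiber. Since $X_{\zeta_0}$ is a compact K\"ahler manifold of complex dimension $2k$, we have $\int_{X_{\zeta_0}} (\omega')^{2k} \neq 0$, so $[\omega']^{2k} \neq 0$ in $H^{4k}(X_{\zeta_0})$ and a fortiori $[\omega'] \neq 0$ in $H^{1,1}(X_{\zeta_0})$. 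Therefore $\phi_{\zeta_0}$ represents a nonzero cohomology class, the Kodaira--Spencer map is non-zero at $\zeta_0$, and since $\zeta_0$ was arbitrary the lemma follows.

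The step I expect to be the real obstacle is the last one: threading the holomorphic-symplectic isomorphism through the Dolbeault isomorphism and pinning down the constants carefully enough to see that $\phi_{\zeta_0}$ lands on a multiple of $[\omega']$ --- rather than on some class whose non-triviality is not immediately visible. Everything else --- block-diagonality of the twistor complex structure, the quaternionic linear algebra producing $J'' - iJ'''$, and non-vanishing of a K\"ahler class on a compact K\"ahler manifold --- is standard bookkeeping. One can also package the middle two paragraphs invariantly: $Z$ carries a holomorphic section of $\Lambda^2 \mathcal{T}^*_{Z/\CP_1} \otimes \varpi^*\mathcal{O}(2)$ restricting to $\sigma$ on each fiber, and differentiating this structure along the base pairs the Kodaira--Spencer class with $\sigma$ to yield $[\omega']$ up to scale.
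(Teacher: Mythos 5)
Your proposal is correct and follows essentially the same route as the paper: identify the Beltrami representative of the Kodaira--Spencer class as a multiple of $J''-iJ'''$ acting on $T^{0,1}$, contract with the fibrewise holomorphic symplectic form $\omega''+i\omega'''$ to land on a nonzero multiple of the K\"ahler class $[\omega']$, and conclude from $\int_X (\omega')^{2k}\neq 0$. The only cosmetic difference is that the paper uses an $\mathbf{SO(3)}$ rotation to normalize $\zeta_0$ to $J_1$ before doing exactly the contraction computation you defer to, which indeed yields $2i[\omega_1]$.
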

 \begin{proof} Since we can always change our basis for the parallel complex structures on $(X,g)$ by the action of $\mathbf{SO(3)}$, 
 we may assume that  the value $\zeta_0$ of   $\zeta\in \CP_1$ at which we wish to check the claim 
 represents the  complex structure  on $X$ we have temporarily chosen to  call $J_1$. 
 Observe  that the $2$-forms $\omega_\alpha = g( J_\alpha\cdot , \cdot )$, $\alpha=1,2,3$,  are all parallel.  
 Moreover, notice   that, 
 with respect to $J_1$, the $2$-form    $\omega_1$ is just   the K\"ahler form of $g$, while   $\omega_2+i\omega_3$ is 
 a non-degenerate   holomorphic $(2,0)$-form. 
  
  By abuse of notation, we will now  use $\zeta$ to also denote a local complex coordinate on $\CP_1$,  with $\zeta=0$ representing 
the complex structure $J_1$ of interest.  Recall that the  Kodaira-Spencer map  sends $d/d\zeta$ to an element of $H^1(X, \mathcal{O}_{J_1}(T^{1,0}_{J_1}X))$ 
 that literally encodes the derivative of the complex structure $J_\zeta$ with respect to   $\zeta$. Indeed, since we already have chosen a differentiable trivialization of our
 family, this element  is represented in Dolbeault cohomology by the $(0,1)$-form $\varphi$ with values in $T^{1,0}$ 
given  by  
 $$\varphi (v)  :=  \left.\left[\frac{d}{d\zeta} J_\zeta (v^{0,1})\right]^{1,0}\right|_{\zeta=0}$$
 where  the  decomposition $T_\CC X = T^{1,0}\oplus T^{0,1}$ used here is understood to be the one determined by $J_1$. 
 Now taking $\zeta$ to specifically be the stereographic  coordinate $\zeta = \xi + i \eta$,  where $\xi = b/(1+a)$ and $\eta = c/(1+a)$, we then have 
 $$\left.\frac{d}{d\xi}J_\zeta \right|_{\zeta=0} = J_2 \quad \mbox{and} \quad \left.\frac{d}{d\eta}J_\zeta \right|_{\zeta=0} = J_3,$$
and hence 
 $$
  \left.\frac{d}{d\zeta}J_\zeta \right|_{\zeta=0}= \frac{1}{2} (J_2-iJ_3).
 $$
 Since $T^{0,1}$ is the $(-i)$-eigenspace of $J_1$,  we therefore have 
 \begin{eqnarray*}
\varphi (v) &=& \frac{1}{2}  \left[ (J_2-iJ_3) v^{0,1}  \right]^{1,0}\\
&=& \frac{1}{2}  \left[ (J_2+iJ_2J_1) v^{0,1}  \right]^{1,0}\\
&=&  \left[ J_2  (v^{0,1})  \right]^{1,0}\\
 &=& J_2 (v^{0,1}) 
 \end{eqnarray*}
 where the last step uses the fact that $J_2$ anti-commutes with $J_1$, and therefore interchanges the $(\pm i)$-eigenspaces 
 $T^{1,0}$ and 
 $T^{0,1}$ of $J_1$. 
 
On the other hand, since $\omega_2+ i \omega_3$ is a non-degenerate holomorphic  $2$-form on $(X, J_1)$, contraction with this form
induces a holomorphic  isomorphism $T^{1,0}\cong \Lambda^{1,0}$, and hence an isomorphism $H^1(X, \mathcal{O}(T^{1,0}))\cong H^1(X, \Omega^1)$.
In Dolbeault terms, the Kodaira-Spencer class $[\varphi ]$ is thus mapped by this isomorphism
 to the element of $H^{1,1}_{\bar{\partial}_{J_1}}(X)= H^1(X, \Omega^1)$
 represented by the contraction $\varphi \lrcorner (\omega_2 + i \omega_3)$. Since 
\begin{eqnarray*}
[\varphi (v^{0,1})] \lrcorner (\omega_2 + i \omega_3) &=& g([J_2+ i J_3] \varphi (v^{0,1}) , \cdot )\\
 &=& g([J_2+ i J_1J_2] J_2 (v^{0,1}) , \cdot )\\
 &=& g(-[ I + i J_1] v^{0,1} , \cdot )\\
  &=& -2i\, \omega_1( v^{0,1} , \cdot )\\
   &=& 2i\, \omega_1(\cdot ,  v^{0,1} ), 
\end{eqnarray*}
the Kodaira-Spencer class is therefore mapped  to  $2i [\omega ] \in H^{1,1}_{\bar{\partial}_{J_1}}(X)$. 
However,     since  $[\omega_1]^{2k}$ pairs with fundamental cycle $[X]$ to  yield $(2k)!$ times the total volume  of $(X,g)$, 
   $2i\, [\omega_1]$ is certainly non-zero in deRham cohomology, and  is therefore    non-zero in 
   Dolbeault cohomology, too. The Kodaira-Spencer map of such a twistor family is thus everywhere non-zero, 
   as claimed. 
    \end{proof}
     
  We next define many new complex structures on $X\times S^2$ by  generalizing a
  construction \cite{lebchern} originally  introduced  in the $k=1$ case   to solve a
  different problem. 
  Let $f : \CP_1 \to \CP_1$ be a  holomorphic map of arbitrary degree  $\ell$. 
  We  then define
  a holomorphic family $f^{*}\varpi$ 
  over ${\mathbb C \mathbb P}_{1}$ by pulling 
  $\varpi$ back  via $f$:
  $$\begin{array}{ccc}
  	f^{*} Z& \stackrel{\hat{f}}{\longrightarrow} & Z  \\
 {\scriptstyle  f^{*}\varpi}	\downarrow 
 \hphantom{{\scriptstyle f^{*}\varpi}}&  &
 {\scriptstyle  \varpi} \downarrow \hphantom{{\scriptstyle \varpi}} \\
  	{\mathbb C \mathbb P}_{1} & \stackrel{f}{\longrightarrow} & {\mathbb C \mathbb P}_{1}.
  \end{array}$$
  In other words,  if $\varGamma\subset \CP_1 \times \CP_1$ is the graph of $f$, then 
  $f^{*}Z$ is the inverse image of $\varGamma$  under $Z\times {\mathbb C \mathbb P}_{1} 
  \stackrel{\varpi\times 1}{\longrightarrow} {\mathbb C \mathbb P}_{1}
  \times {\bcp}_{1}$. Since $\varpi$ is 
  differentiably trivial, so is $ \hat{\varpi}:= f^{*}\varpi$, and 
  $ \hat{Z}:= f^{*}Z$ may therefore be viewed as
  $X\times S^2$  equipped
  with some new  complex structure $J_{f}$. 

\begin{lem} 
\label{lulu}
Let $\hat{Z}= f^*Z$ be the complex $(2k+1)$-manifold associated with a holomorphic map $f: \CP_1 \to \CP_1$ 
of degree $\ell$, and let $\hat{\varpi} = f^*\varpi$ be the associated holomorphic submersion $\hat{\varpi} = f^* \varpi$. 
Then the canonical line bundle $K_{\hat{Z}}$ is  isomorphic to $\hat{\varpi}^* \mathcal{O}(-2k\ell -2)$ as a holomorphic 
line bundle. 
\end{lem}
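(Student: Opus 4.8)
The plan is to first settle the model case $f=\mathrm{id}$, i.e.\ to compute $K_{Z}$ for the original twistor space, and then to obtain the general statement by pulling back along $\hat f$. For the first step I would invoke the \emph{twisted holomorphic symplectic form} carried by the twistor space of a hyper-K\"ahler manifold (a standard feature; cf.\ \cite{bes,salqk}). Writing $V = \ker(d\varpi)\subset T^{1,0}Z$ for the vertical tangent bundle of $\varpi$, the point is that, as the parallel complex structure $J_\zeta = aJ_1+bJ_2+cJ_3$ rotates over $\CP_1$, the holomorphic symplectic forms $\omega_2^{(\zeta)}+i\omega_3^{(\zeta)}$ of the fibers $X_\zeta$ assemble into a global holomorphic section $\sigma$ of $\Lambda^2 V^*\otimes\varpi^*\mathcal{O}(2)$; the twist is $\mathcal{O}(2)$ precisely because, in the stereographic coordinate $\zeta$ used in Lemma~\ref{lemon}, this fiberwise symplectic form depends quadratically on $\zeta$. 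Since $\sigma$ is fiberwise non-degenerate, $\sigma^{\wedge k}$ is a nowhere-vanishing holomorphic section of $\Lambda^{2k}V^*\otimes\varpi^*\mathcal{O}(2k)=K_{Z/\CP_1}\otimes\varpi^*\mathcal{O}(2k)$, so $K_{Z/\CP_1}\cong\varpi^*\mathcal{O}(-2k)$; combined with the relative cotangent sequence $0\to\varpi^*\Omega^1_{\CP_1}\to\Omega^1_Z\to\Omega^1_{Z/\CP_1}\to 0$ and $K_{\CP_1}=\mathcal{O}(-2)$, this gives $K_Z\cong\varpi^*\mathcal{O}(-2k-2)$.

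For the second step, the key observation is that $\hat\varpi:\hat Z\to\CP_1$ is the base change of $\varpi$ along $f$, so its vertical tangent bundle is the $\hat f$-pullback of $V$. Concretely, near any point one may choose local coordinates $(x_1,\dots,x_{2k},w)$ on $Z$ in which $\varpi$ is the projection to $w$; if $\zeta$ is a local coordinate on the source $\CP_1$ and $w=f(\zeta)$, then $(x_1,\dots,x_{2k},\zeta)$ are local coordinates on $\hat Z$, in which $\hat f$ is $(x,\zeta)\mapsto(x,f(\zeta))$ and $\hat\varpi$ is the projection to $\zeta$. In particular $\hat Z$ is smooth even over the branch points of $f$, and $d\hat f$ carries $\ker d\hat\varpi$ isomorphically onto $\ker d\varpi = V$ along fibers. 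Hence $\ker d\hat\varpi\cong\hat f^*V$ as holomorphic bundles, and therefore $K_{\hat Z/\CP_1}\cong\hat f^*K_{Z/\CP_1}$.

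Finally I would assemble the pieces. Using $\varpi\circ\hat f=f\circ\hat\varpi$ and the fact that a holomorphic self-map of $\CP_1$ of degree $\ell$ satisfies $f^*\mathcal{O}(1)\cong\mathcal{O}(\ell)$, Steps~2 and~1 give
$K_{\hat Z/\CP_1}\cong\hat f^*\varpi^*\mathcal{O}(-2k)\cong\hat\varpi^*f^*\mathcal{O}(-2k)\cong\hat\varpi^*\mathcal{O}(-2k\ell)$.
The relative cotangent sequence for $\hat\varpi$ then yields
$K_{\hat Z}\cong K_{\hat Z/\CP_1}\otimes\hat\varpi^*K_{\CP_1}\cong\hat\varpi^*\mathcal{O}(-2k\ell)\otimes\hat\varpi^*\mathcal{O}(-2)\cong\hat\varpi^*\mathcal{O}(-2k\ell-2)$,
which is the assertion. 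The only genuinely delicate ingredient is the bookkeeping in Step~1 — checking that the fiberwise holomorphic symplectic forms patch together with exactly an $\mathcal{O}(2)$ twist — but this is classical and can be verified directly in the stereographic chart; everything after that is formal manipulation of relative cotangent sequences and functoriality of pullback. (Equivalently, one can bypass the intermediate computation of $K_Z$ by pulling $\sigma$ itself back to $\hat Z$, obtaining a nowhere-zero section of $K_{\hat Z/\CP_1}\otimes\hat\varpi^*\mathcal{O}(2k\ell)$ and arguing as above.)
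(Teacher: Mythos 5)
Your argument is correct, but it reaches the conclusion by a genuinely different route than the paper. The paper's proof is two lines: it quotes the standard fact $K_Z=\varpi^*\mathcal{O}(-2k-2)$ for hyper-K\"ahler twistor spaces, and then applies the Hurwitz-type canonical bundle formula to the degree-$\ell$ branched map $\hat{f}:\hat{Z}\to Z$, whose ramification divisor is $\hat{\varpi}^*$ of the degree-$(2\ell-2)$ ramification divisor of $f$, giving $K_{\hat{Z}}=[B]\otimes\hat{f}^*K_Z\cong\hat{\varpi}^*\left[\mathcal{O}(2\ell-2)\otimes\mathcal{O}(-\ell(2k+2))\right]$. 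You instead (i) reprove the quoted fact from the twisted relative holomorphic symplectic form $\sigma\in H^0\bigl(Z,\Lambda^2V^*\otimes\varpi^*\mathcal{O}(2)\bigr)$, whose $k$-th power trivializes $K_{Z/\CP_1}\otimes\varpi^*\mathcal{O}(2k)$, and (ii) replace the ramification-divisor computation by base change of the relative tangent bundle, $T_{\hat{Z}/\CP_1}\cong\hat{f}^*T_{Z/\CP_1}$, together with $f^*\mathcal{O}(-2k)\cong\mathcal{O}(-2k\ell)$ and the relative canonical sequence for $\hat{\varpi}$. The two versions of the second step are in fact equivalent: the paper's factor $\mathcal{O}(2\ell-2)$ is exactly Riemann--Hurwitz on the base, $K_{\CP_1}\cong f^*K_{\CP_1}\otimes\mathcal{O}(2\ell-2)$, which your relative-canonical bookkeeping absorbs automatically, so no divisor counting is needed; your coordinate verification that the fiber product is smooth and that $\ker d\hat{\varpi}\cong\hat{f}^*\ker d\varpi$ even over branch points is the right thing to check there. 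What each approach buys: the paper's is shorter because it takes $K_Z$ off the shelf, while yours is self-contained and makes the structural reason ($K_{Z/\CP_1}\cong\varpi^*\mathcal{O}(-2k)$ pulls back functorially) more transparent -- at the cost of owing the verification that the fiberwise symplectic forms patch with precisely an $\mathcal{O}(2)$ twist, which is classical and which you correctly flag as the one delicate point.
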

\begin{proof} The twistor space of any hyper-K\"ahler manifold $(X^{4k},g)$ satisfies $K_Z= \varpi^* \mathcal{O} (- 2k-2)$. 
On the other hand, the branch locus $B$ of  $\hat{f} : \hat{Z}\to Z$ is the inverse image via $\hat{\varpi}$  of 
$2\ell -2$ points in $\CP_1$, counted with multiplicity. Thus 
$$K_{\hat{Z}} = [B] \otimes \hat{f}^*K_Z \cong \hat{\varpi}^*[\mathcal{O}(2\ell -2) \otimes \mathcal{O}(\ell (-2k-2))]= \hat{\varpi}^*\mathcal{O}(-2k\ell -2),$$
as claimed.  
\end{proof}      

This now provides one cornerstone of our argument:

\begin{prop}
\label{corner}
If  $\hat{Z}= f^*Z$ is the complex $(2k+1)$-manifold arising from a simply connected  hyper-K\"ahler manifold $(X^{4k},g)$ and a holomorphic map \linebreak 
$f: \CP_1 \to \CP_1$ 
of degree $\ell$, then there is a unique holomorphic line bundle $K^{-1/(2k\ell + 2)}$ whose $(2+ 2k\ell)^{\rm th}$ tensor power is isomorphic to the 
anti-canonical line bundle. Moreover, $h^0 ( Z, \mathcal{O} (K^{-1/(2k\ell + 2)}))=2$, and the pencil of sections of this line bundle exactly
reproduces  the holomorphic map $\hat{\varpi}: \hat{Z}\to \CP_1$. Thus the holomorphic submersion $\hat{\varpi}$ is an intrinsic 
property of the compact complex manifold $\hat{Z}= (X\times S^2, J_f)$, and is uniquely  determined, up to M\"obius transformation,  by the complex structure structure $J_f$. 
\end{prop}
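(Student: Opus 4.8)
The plan is to produce the root explicitly and then use the topology of $\hat{Z}$ to see that it is the only one. Since $X$ is simply connected and $\hat{Z}$ is diffeomorphic to $X\times S^2$, the manifold $\hat{Z}$ is simply connected; hence $H^1(\hat{Z},\ZZ)=0$ and, by the universal coefficient theorem, $H^2(\hat{Z},\ZZ)$ is torsion-free. Feeding this into the exponential sheaf sequence $H^1(\hat{Z},\mathcal{O})\to H^1(\hat{Z},\mathcal{O}^*)\to H^2(\hat{Z},\ZZ)$ shows that $\mathrm{Pic}(\hat{Z})$ is torsion-free: a torsion line bundle maps to a torsion --- hence zero --- class in $H^2(\hat{Z},\ZZ)$, so it lies in the image of the $\CC$-vector space $H^1(\hat{Z},\mathcal{O})$, which is itself torsion-free, and so the bundle must be trivial. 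In particular, a line bundle on $\hat{Z}$ is determined by any one of its tensor powers, so the $(2k\ell+2)^{\rm th}$ root of $K_{\hat{Z}}^{-1}$ is unique whenever it exists. But Lemma \ref{lulu} gives $K_{\hat{Z}}^{-1}\cong\hat{\varpi}^*\mathcal{O}(2k\ell+2)$, so $L:=\hat{\varpi}^*\mathcal{O}(1)$ is manifestly such a root, and setting $K^{-1/(2k\ell+2)}:=L$ proves the first assertion.

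The second assertion amounts to computing $h^0(\hat{Z},L)$. Since $\hat{\varpi}$ is a proper submersion whose fibers $X_\zeta$ are compact and connected, any holomorphic function on $\hat{\varpi}^{-1}(U)$, for $U\subset\CP_1$ open, is constant along each fiber and therefore descends to a holomorphic function on $U$; thus $\hat{\varpi}_*\mathcal{O}_{\hat{Z}}=\mathcal{O}_{\CP_1}$. The projection formula then yields $H^0(\hat{Z},L)\cong H^0\big(\CP_1,\mathcal{O}(1)\otimes\hat{\varpi}_*\mathcal{O}_{\hat{Z}}\big)=H^0(\CP_1,\mathcal{O}(1))\cong\CC^2$, so $h^0(\hat{Z},L)=2$.

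It then remains to identify the associated pencil with $\hat{\varpi}$. Because $\mathcal{O}(1)$ is globally generated on $\CP_1$ and $\hat{\varpi}$ is surjective, $L=\hat{\varpi}^*\mathcal{O}(1)$ is globally generated, so the complete linear system $|L|$ defines a morphism $\phi : \hat{Z}\to\CP\big(H^0(\hat{Z},L)^*\big)$. The isomorphism $\hat{\varpi}^* : H^0(\CP_1,\mathcal{O}(1))\xrightarrow{\sim}H^0(\hat{Z},L)$ of the previous paragraph dualizes to an isomorphism $\CP\big(H^0(\hat{Z},L)^*\big)\cong\CP\big(H^0(\CP_1,\mathcal{O}(1))^*\big)=\CP_1$ that carries $\phi$ to $\hat{\varpi}$. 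The only freedom in this chain of identifications is the choice of that last isomorphism with $\CP_1$, i.e.\ an element of $\mathrm{PGL}(2,\CC)$; hence $\hat{\varpi}$ is recovered, up to post-composition with a M\"obius transformation, from the intrinsic data $K^{-1/(2k\ell+2)}$ and its space of sections, and therefore from the complex structure $J_f$ alone.

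The one genuinely substantive point in all of this is the torsion-freeness of $\mathrm{Pic}(\hat{Z})$, which is exactly where the simple-connectivity hypothesis on $X$ enters; everything else is formal, resting only on Lemma \ref{lulu} together with the fact that $\hat{\varpi}$ is a proper submersion with compact connected fibers.
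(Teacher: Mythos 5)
Your proposal is correct and follows essentially the same route as the paper: uniqueness of the root from simple connectivity of $\hat{Z}$, existence via Lemma \ref{lulu}, fiberwise triviality of $\hat{\varpi}^*\mathcal{O}(1)$ to get $h^0=2$, and recovery of $\hat{\varpi}$ up to M\"obius transformation from the resulting pencil. The only (cosmetic) difference is that you derive uniqueness from the exponential sequence together with torsion-freeness of $H^2(\hat{Z},\ZZ)$ and $H^1(\hat{Z},\ZZ)=0$, whereas the paper uses $H^1(\hat{Z},\ZZ_{2k\ell+2})=0$ and the sequence $0\to\ZZ_{2k\ell+2}\to\mathcal{O}^\times\to\mathcal{O}^\times\to 0$; both arguments rest on exactly the same hypothesis.
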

  \begin{proof} Because $\hat{Z}\approx X\times S^2$ is simply connected, $H^1(\hat{Z}, \ZZ_{2k\ell +2}) = 0$, and the long exact sequence 
  induced by the short exact sequence
  of sheaves 
  $$0\to \ZZ_{2k\ell +2} \to \mathcal{O}^\times \to \mathcal{O}^\times \to 0$$
  therefore guarantees that there can be at most one holomorphic line bundle $K^{-1/(2k\ell + 2)}$ whose $(2+ 2k\ell)^{\rm th}$ tensor power is
  the anti-canonical line bundle $K^{*}$. Since Lemma \ref{lulu} guarantees that $\hat{\varpi}^* \mathcal{O}(1)$ is one candidate
  for this root of $K^*$, it is therefore the unique such root. On the other hand, since  $\hat{\varpi}^* \mathcal{O}(1)$ is trivial on the compact fibers
  of $\hat{\varpi}$, any holomorphic section of this line bundle on $\hat{Z}$ is fiber-wise constant, and is therefore  the pull-back of a section of
  $\mathcal{O}(1)$ on $\CP_1$. Thus $h^0 ( Z, \mathcal{O} (K^{-1/(2k\ell + 2)}))=h^0(\CP_1, \mathcal{O}(1))=2$, and the pencil
  of sections of  $K^{-1/(2k\ell + 2)}$ thus exactly reproduces  $\hat{\varpi}: \hat{Z} \to \CP_1$. \end{proof}

    Here,  the role of the  M\"obius transformations  is of course  unavoidable.  After all, preceding
     $f$ by a M\"obius transformation will certainly result in a biholomorphic manifold! 
     
     \bigskip 
     
      Since $\hat{\varpi}$ is intrinsically determined by the complex structure of  $\hat{Z}$, its complex structure also completely determines
      those elements of $\CP_1$ at which the Kodaira-Spencer map of the family $\hat{\varpi} : \hat{Z} \to \CP_1$ vanishes; this is 
      the same as asking for  fibers for which  there is a transverse holomorphic foliation of the first formal neighborhood. Similarly,
      one can ask whether there are elements of $\CP_1$ at which the Kodaira-Spencer map vanishes to order $m$;
      this is the same as asking for fibers for which  there is a transverse holomorphic foliation of the $(m+1)^{\rm st}$ formal neighborhood.

      \begin{prop} 
      \label{stone} 
      The critical points of $f : \CP_1 \to \CP_1$, along with their multiplicities, can be reconstructed from the submersion $f^* {\varpi} : f^* {Z} \to \CP_1$. 
           \end{prop}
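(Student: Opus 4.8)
The Kodaira--Spencer map of a holomorphic submersion is a canonical invariant of the submersion itself, so the content of the proposition is that one can read $\crit(f)$, with multiplicities, off the Kodaira--Spencer map of $\hat{\varpi}:=f^{*}\varpi : \hat{Z}\to \CP_1$. The plan is to show that the zero divisor of this Kodaira--Spencer map is exactly the ramification divisor $\sum_{p\in \CP_1}(\mathrm{ord}_p\, f')\,[p]$ of $f$. As a first step, I would organize the Kodaira--Spencer map of $\varpi : Z\to \CP_1$ globally: since $\varpi$ is a family of compact K\"ahler manifolds, the Hodge numbers of the fibers are constant, so $h^1(X_\zeta,\mathcal{O}(T^{1,0}X_\zeta))=h^{1,1}(X)$ is independent of $\zeta$, and Grauert's base-change theorem makes $\mathcal{E}:=R^1\varpi_*\Theta_{Z/\CP_1}$ a holomorphic vector bundle on $\CP_1$ whose fiber at $\zeta$ is $H^1(X_\zeta,\mathcal{O}(T^{1,0}X_\zeta))$. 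The Kodaira--Spencer map of $\varpi$ is then a holomorphic section $\kappa$ of $\mathrm{Hom}(T^{1,0}\CP_1,\mathcal{E})$, and Lemma \ref{lemon} says precisely that $\kappa_\zeta\neq 0$ for every $\zeta$; since $T^{1,0}_\zeta\CP_1$ is a line, this means $\kappa_\zeta$ is injective, i.e.\ $\kappa$ carries a local holomorphic frame of $T^{1,0}\CP_1$ to a nowhere-vanishing local section of $\mathcal{E}$.

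Next I would invoke the naturality of the Kodaira--Spencer construction under base change: if $\pi$ is a holomorphic submersion and $f$ a holomorphic map, then $\Theta_{\hat Z/\CP_1}\cong \hat f^{*}\Theta_{Z/\CP_1}$, flat base change gives $R^1\hat\varpi_*\Theta_{\hat Z/\CP_1}\cong f^{*}\mathcal{E}$, and the Kodaira--Spencer map of $f^{*}\pi$ is the composition of $df$ with the Kodaira--Spencer map of $\pi$. In the present twistor setting this last fact is transparent from the Dolbeault formula used in the proof of Lemma \ref{lemon}: at a point $\zeta_0\in\CP_1$ with $w_0=f(\zeta_0)$, the Kodaira--Spencer class of $\hat{\varpi}$ is represented by
$$\hat{\varphi}(v)=\left.\left[\frac{d}{d\zeta}\,J_{f(\zeta)}(v^{0,1})\right]^{1,0}\right|_{\zeta=\zeta_0},$$
and since $f$ is holomorphic we have $\partial\,\overline{f(\zeta)}/\partial\zeta=0$, so the chain rule yields $\frac{d}{d\zeta}J_{f(\zeta)}\big|_{\zeta_0}=f'(\zeta_0)\,\frac{d}{dw}J_w\big|_{w_0}$, whence $[\hat{\varphi}]=f'(\zeta_0)\,[\varphi_{w_0}]$ in $H^1(X_{w_0},\mathcal{O}(T^{1,0}X_{w_0}))$, where $[\varphi_{w_0}]$ is the Kodaira--Spencer class of $\varpi$ at $w_0$. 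Globally, writing $\rho$ for the Kodaira--Spencer section of $\hat{\varpi}$ (a section of $\mathrm{Hom}(T^{1,0}\CP_1,R^1\hat{\varpi}_*\Theta_{\hat Z/\CP_1})$, and using $R^1\hat{\varpi}_*\Theta_{\hat Z/\CP_1}\cong f^{*}\mathcal{E}$), this reads $\rho(\partial_\zeta)=f'(\zeta)\cdot f^{*}\!\big(\kappa(\partial_w)\big)$ in any pair of local holomorphic coordinates $\zeta$ and $w=f(\zeta)$.

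Since $\kappa(\partial_w)$ is a nowhere-vanishing local section of $\mathcal{E}$ near $w_0$, its pullback $f^{*}\!\big(\kappa(\partial_w)\big)$ is a nowhere-vanishing local section of $R^1\hat{\varpi}_*\Theta_{\hat Z/\CP_1}$ near $\zeta_0$, so the displayed identity shows that $\rho$ vanishes at $\zeta_0$ to exactly the order to which $f'$ vanishes there. Therefore the zero divisor of the intrinsically defined Kodaira--Spencer map of $\hat{\varpi}=f^{*}\varpi$ equals the ramification divisor of $f$: its support recovers $\crit(f)$, and its coefficient at a critical point recovers the multiplicity of that critical point. (Equivalently, in the language preceding the statement, the multiplicity of a critical point $\zeta_0$ of $f$ is the largest $m$ for which the $(m{+}1)^{\rm st}$ formal neighborhood of $\hat{\varpi}^{-1}(\zeta_0)$ in $\hat{Z}$ admits a transverse holomorphic foliation, which is manifestly a property of the submersion.) The only points that really need care are the base-change naturality of the Kodaira--Spencer map and the constancy of $h^1$ that turns $\mathcal{E}$ into an honest vector bundle; both are standard, and in the twistor context the former is immediate from the Dolbeault computation above, so I expect the write-up to be short.
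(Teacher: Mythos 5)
Your proposal is correct and follows essentially the same route as the paper: functoriality of the Kodaira--Spencer map under pull-back (it transforms like a bundle-valued $1$-form, so the class of $\hat\varpi$ at $\zeta_0$ is $f'(\zeta_0)$ times that of $\varpi$ at $f(\zeta_0)$), combined with the everywhere-nonvanishing of the Kodaira--Spencer map of $\varpi$ from Lemma~\ref{lemon}, identifies the vanishing order of the Kodaira--Spencer map of $f^*\varpi$ with the ramification order of $f$. Your write-up merely makes explicit the global direct-image bundle $R^1\varpi_*\Theta_{Z/\CP_1}$ and the Dolbeault chain-rule check that the paper leaves implicit.
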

      \begin{proof}
      The Kodaira-Spencer map is functorial, and transforms  with respect to pull-backs like a bundle-valued
      $1$-form. Since the Kodaira-Spencer map of $\varpi$ is everywhere non-zero by Lemma \ref{lemon}, the  points
      at which the Kodaira-Spencer map of  $\hat{\varpi}=f^*\varpi$    vanishes to order $m$ are exactly those points at which the derivative
      of $f: \CP_1 \to \CP_1$ has a critical point of order $m$.
       \end{proof}
       
     Taken together, Propositions \ref{corner} and \ref{stone}  thus  imply the following: 
           
     \begin{main} 
     \label{configure}  Modulo  M\"obius transformations,  the configuration of critical points of $f : \CP_1 \to
      \CP_1$, along with their multiplicities,  
     is   an intrinsic invariant of the compact complex manifold  $\hat{Z} = f^* Z$.  
     \end{main}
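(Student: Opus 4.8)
The plan is to assemble Theorem~\ref{configure} directly from the two propositions just proved. First I would recall that by Proposition~\ref{corner}, the holomorphic submersion $\hat{\varpi} : \hat{Z} \to \CP_1$ is recovered, up to post-composition with a M\"obius transformation of $\CP_1$, purely from the complex structure $J_f$ on $\hat{Z} = X \times S^2$; indeed, $\hat{\varpi}$ is canonically identified with the pencil map associated to the unique $(2k\ell+2)^{\rm th}$ root $K^{-1/(2k\ell+2)}$ of the anti-canonical bundle, which has exactly a two-dimensional space of sections. So any invariant of the submersion $\hat{\varpi}$ that is insensitive to composing $\hat{\varpi}$ with a M\"obius transformation is automatically an invariant of the biholomorphism type of $\hat{Z}$.

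Next I would apply Proposition~\ref{stone}: the critical points of $f$, together with their multiplicities, are determined by the submersion $\hat{\varpi} = f^*\varpi$ alone, via the locus where its Kodaira--Spencer map vanishes (to prescribed order), using Lemma~\ref{lemon} to know that the Kodaira--Spencer map of $\varpi$ itself never vanishes, so that the vanishing orders of the Kodaira--Spencer map of $f^*\varpi$ coincide exactly with the ramification orders of $f$. Combining the two: from $J_f$ alone we recover $\hat{\varpi}$ up to a M\"obius transformation $\mu \in \mathrm{PGL}(2,\CC)$ of the target, i.e. we recover the map $\mu \circ f$ for some unknown $\mu$; and from $\mu \circ f$ we recover the set of critical points of $\mu \circ f$ with multiplicities. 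But a M\"obius transformation is a biholomorphism of $\CP_1$, so it carries no ramification: $\mu \circ f$ and $f$ have literally the same critical points in the domain $\CP_1$ with the same multiplicities. Hence the configuration (the multiset of critical points in the domain $\CP_1$, which is only well-defined up to the residual ambiguity in how we presented that domain) is determined by $J_f$.

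One point to be careful about is precisely which domain-side ambiguity remains, and that is why the statement reads ``modulo M\"obius transformations.'' The recovery in Proposition~\ref{corner} pins down $\hat{\varpi}$ as an abstract submersion $\hat{Z} \to \CP_1$, but the identification of its base with a \emph{standard} $\CP_1$ is only canonical up to $\mathrm{Aut}(\CP_1) = \mathrm{PGL}(2,\CC)$; equivalently, the pencil of sections of $K^{-1/(2k\ell+2)}$ gives a map to $\mathbb{P}(H^0(\hat{Z},K^{-1/(2k\ell+2)})^*) \cong \CP_1$, and the choice of basis for that two-dimensional space is free. So what is intrinsic is the set of critical points of $f$ \emph{as a configuration on an abstract $\CP_1$}, i.e. modulo the simultaneous action of $\mathrm{PGL}(2,\CC)$ on the domain. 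I would phrase the proof to make this explicit, so that there is no confusion between the (irrelevant) target M\"obius freedom coming from precomposition and the (genuine) domain M\"obius freedom coming from the intrinsic lack of a marked base point.

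The main obstacle is essentially bookkeeping rather than a substantive difficulty: the two propositions do all the real work, so the only thing to get right is the logical order --- first reconstruct $\hat{\varpi}$ (up to M\"obius) from the complex manifold, then reconstruct the ramification data from $\hat{\varpi}$ --- and to state the conclusion with the correct equivalence relation on configurations. I would therefore keep the proof short: cite Proposition~\ref{corner} for intrinsicality of $\hat{\varpi}$ up to M\"obius transformation, cite Proposition~\ref{stone} for the reconstruction of the critical points with multiplicities from $\hat{\varpi}$, note that M\"obius transformations are unramified so pre- or post-composition does not alter the critical configuration's multiplicities, and conclude.
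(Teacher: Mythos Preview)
Your proposal is correct and follows exactly the paper's approach: the paper simply states that Theorem~\ref{configure} is an immediate consequence of Propositions~\ref{corner} and~\ref{stone} taken together, without writing out any further argument. Your careful bookkeeping about the domain-versus-target M\"obius ambiguity is more explicit than what the paper provides, but is entirely in line with the intended logic.
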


By displaying suitable families 
of holomorphic  maps $\CP_1 \to \CP_1$, we will now use Theorem \ref{configure} prove Theorem \ref{crux}.
Indeed, for any  $(a_1, \ldots , a_N) \in \CC^N$ with $|a_j -2j|< 1$,
let $P_{a_1, \ldots , a_N}(\zeta)$ be the   polynomial of degree $N+6$ in the complex variable $\zeta$ defined by 
$$
P_{a_1, \ldots , a_N}(\zeta) = \int_0^\zeta t^2  (t-1)^3 (t-a_1) \cdots (t-a_N)dt,
$$
and let $f_{a_1, \ldots , a_N} : \CP_1 \to \CP_1$ be the self-map of $\CP_1 = \CC\cup \{ \infty \}$ 
obtained by extending $P_{a_1, \ldots , a_N} : \CC \to \CC$ via  $\infty \mapsto \infty$; 
in other words, 
$$
f_{a_1, \ldots , a_N} ([\zeta_1 , \zeta_2]) = [ P_{a_1, \ldots , a_N} (\zeta_1, \zeta_2) , \zeta_2^{N+6}], 
$$
where $P_{a_1, \ldots , a_N}(\zeta_1, \zeta_2)$ is the homogeneous polynomial formally defined by 
$$P_{a_1, \ldots , a_N}(\zeta_1, \zeta_2) = \zeta_2^{N+6} P_{a_1, \ldots , a_N}({\textstyle \frac{\zeta_1}{\zeta_2}}).$$
Since the constraints we have imposed on our auxiliary parameters force the complex numbers $0,1, a_1, \ldots , a_N$ to all be distinct,
the critical points of  $f_{a_1, \ldots , a_N} : \CP_1 \to \CP_1$ are just the   $a_1, \ldots , a_N$, each with multiplicity $1$, 
along with $0$, $1$, and $\infty$, which  are individually  distinguishable by  their respective multiplicities of  $2$, $3$, and $N+5$. 
Since any M\"obius transformation that fixes $0$, $1$, and $\infty$ must be the identity,  Theorem \ref{configure} implies 
that different values of the parameters $(a_1, \ldots , a_N)$, subject the constraints $|a_j-2j|< 1$, will always result in non-biholomorphic complex manifolds $\hat{Z}_{a_1, \ldots , a_N}:= f_{a_1, \ldots , a_N}^*Z$.
Thus, pulling back $\varpi: Z\to \CP_1$  via the holomorphic map 
\begin{eqnarray*}
\Phi : D^N \times \CP_1 &\longrightarrow& \CP_1\\
   (u_1, \ldots , u_n, [\zeta_1, \zeta_2] ) & \longmapsto & f_{u_1+2, \ldots , u_N + 2N} ([\zeta_1, \zeta_2])
\end{eqnarray*}
now produces  a family $\Phi^* \varpi : \Phi^* Z\to D^N$ 
of mutually non-biholomorphic  complex manifolds over the  unit polydisk $D^N\subset \CC^N$.
 Since these manifolds are all diffeomorphic to $X\times S^2$, and since   
 this works for any positive integer $N$,  Theorem \ref{crux} is therefore an immediate consequence. 

\bigskip

Of course, the above proof is set in the wide world of  compact complex manifolds, and so has little to say about   conditions prevailing 
in the tidier  realm of, say, complex algebraic varieties. In fact, one should  probably expect  the  examples described in this article to 
never be of K\"ahler type, since  there are results in this direction  \cite{lebchern} when $k=1$. It would certainly be interesting to see this definitively
established  for general $k$. 

On the other hand, the   feature  of the $k=1$ case  highlighted by  \cite{lebchern} readily  generalizes to higher dimensions;  
namely,  the Chern numbers of the complex structures $J_f$
change as we vary  the degree of $f$. Indeed,  notice the tangent bundle of $X\times S^2$ is stably isomorphic to the pull-back of the tangent 
bundle of $X$, and that $TX$ has some non-trivial Pontrjagin numbers;  for example, if we  assume for simplicity that $X$ is irreducible, we then have
  $\hat{A}(X) =k+1$. Since the fibers of $f^*\varpi$
are Poincar\'e dual to $c_1 (f^* Z) /(2k\ell +2)$, we have  $(\mathbf{c_1 \hat{A}})(f^*Z)= 2(k\ell +1)(k+1)$,
and  certain combination of the Chern numbers of $f^*(Z)$ therefore  grows linearly in  $\ell =\deg f$. 
Consequently, as $N\to \infty$, the families of complex structures
 we have constructed skip through  infinitely many connected components of the moduli space $\mathfrak{M}(X\times S^2)$. 
Is this necessary    for a complex moduli space to fail to be finite-dimensional? 

Finally, notice that the dimension of each  exhibited component of the moduli space $\mathfrak{M}(X\times S^2)$ is higher 
than what might be inferred from our construction. Indeed, we have only made use of a single
hyper-K\"ahler metric $g$ on $X$, whereas these  in practice always come in large families. 
Hyper-K\"ahler twistor spaces also carry a tautological anti-holomorphic involution, whereas their generic small deformations 
generally will not. In short, these moduli spaces are still  largely   {\em terra incognita}. 
Perhaps some interested reader will take up the challenge, and tell us  much  more about them! 

\bigskip

\bigskip

\noindent 
{\bf Acknowledgments:} This paper is dedicated to  my friend and  sometime collaborator Simon Salamon, who first  introduced
me to hyper-K\"ahler manifolds and quaternionic geometry when we were both  graduate students at Oxford. I would 
also like to thank my colleague Dennis Sullivan for   drawing  my attention to the finite-dimensionality  problem for moduli spaces. 

%
%

\bigskip 

\noindent
{\sc Department of Mathematics, State University of New York, Stony Brook, NY 11794-3651, USA}

\end{document}